\documentclass[11pt, reqno]{amsart}
\usepackage{amsthm, amsmath, amsfonts, amssymb, amsbsy, bigstrut, graphicx, enumerate, upref, longtable, comment, csquotes, bm}
\usepackage{float}
\usepackage{subcaption}
\usepackage{pstricks}
\usepackage[breaklinks]{hyperref}
\usepackage[numbers, sort&compress]{natbib}

\newtheorem{theorem}{Theorem}[section]
\newtheorem{lemma}[theorem]{Lemma}

\newcommand{\argmin}{\operatorname{argmin}}

\newcommand{\pkg}[1]{{\fontseries{b}\selectfont #1}} 

\newcommand{\bbx}{\mathbf{x}}
\newcommand{\bba}{\mathbf{A}}
\newcommand{\bbB}{\mathbf{B}}

\newcommand{\ee}{\mathbb{E}}

\newcommand{\pp}{\mathbb{P}}

\newcommand{\rr}{\mathbb{R}}

\newcommand{\MSE}{\text{MSE}}
\numberwithin{equation}{section}

\begin{document}
\title{Optimal choice of $k$ for $k$-nearest neighbor regression}
\author{Mona Azadkia}

\address{\newline Department of Statistics \newline Stanford University\newline Sequoia Hall, 390 Jane Stanford Way \newline Stanford, CA 94305\newline \newline \textup{\tt mazadkia@stanford.edu}}
\keywords{Non-parametric estimation, $k$-NN regression, non-parametric regression, cross-validation}

\begin{abstract}
The $k$-nearest neighbor algorithm ($k$-NN) is a widely used non-parametric method for classification and regression. Finding an optimal $k$ in $k$-NN regression on a given dataset is a problem that has received considerable attention in the literature. Several practical algorithms for solving this problem have been suggested recently. The main result of this paper shows that the value of $k$ obtained by the simple and quick leave-one-out cross-validation (LOOCV) procedure is optimal under fairly general conditions. 
\end{abstract}
\maketitle

\section{Introduction}
Non-parametric regression is an important problem in statistics and machine learning \cite{wassermann06, tsybakov04, gkkw02}. The $k$-nearest neighbors algorithm ($k$-NN) is a popular non-parametric method of classification and regression. For a given set of $n$ data points $(\bbx_i, y_i) \in\rr^d\times\rr$, where $\bbx_i$'s are deterministic measurements and $y_i$'s are noisy observations, then for a point $\bbx\in\rr^d$ the $k$-NN algorithm outputs 
	\begin{eqnarray}
	\hat{y} = m_{k,n}(\bbx) = \frac{1}{k}\sum_{j\in N_k(\bbx)} y_j  
	\end{eqnarray}
as an estimate of $m(\bbx) := \ee[y\mid \bbx]$, where $N_k(\bbx)$ is the set of indices of the $k$ nearest neighbors of $\bbx$ among the $\bbx_i$'s and $\ee[y\mid \bbx]$ denotes the expected value of the response given that the vector of predictors equals $\bbx$. 

	The consistency of $k$-NN estimator for classification and regression has been studied by many researchers, some examples are ~\cite{beck79, BhattacharyaMack87, BickelBreiman83, c84, Collomb79, Collomb80, Collomb81, Cover68a, ch67, Devroye78a, Devroye81, D82, DevroyGyorfi85, DGKL94, Guerre00, Mack81, s77, s84, Zhao87, fh89}. The $k$-NN estimator with a fixed value of $k$ was first analyzed in \cite{ch67}. For $k = 1$, under mild assumptions the risk of the nearest neighbor estimator is as twice as the Bayes risk~\cite{Cover68a}. In general, as long as $k$ is fixed, the risk of the $k$-NN estimator does not converge to the Bayes risk~\cite{BD15}. This is an intuitive result, since for fixed value of $k$, the variance of the estimator does not converge to zero. It was proved in~\cite{s77} that if $k$ is chosen such that $k\rightarrow\infty$ and $k/n\rightarrow 0$ as $n\rightarrow\infty$, the $k$-NN estimator is universally consistent and the risk converges to the Bayes risk. The extra assumption $k/n\rightarrow 0$ is to control the bias of the estimator~\cite{s77}. Later it was shown that for universal consistency we only need $k/\log n\rightarrow 0$ as $n\to\infty$ \cite{DGKL94}.
		
	The problem of finding the optimal growth rate for $k$ and finiding the convergence rate of the error has been studied by many researchers, some examples are \cite{DGKL94, Gyorfi81, Mack83, Kulkarni95, gg02, cd14, bcg10, dgw17, Krzyzak86, Cheng95, BD15, GKM16}. As a result of these efforts, the theoretically optimal value of $k$ is now quite well-understood in various circumstances. But from a practical point of view, these results are hard to implement. This is because the theoretically optimal choice of $k$ often involves knowledge that is not available to the user. For example, it usually involves the error variance, but we cannot get our hands on it before solving the regression problem. 
	
	To circumvent such issues, various interesting ways of estimating the optimal $k$ from the data have been suggested in recent years~\cite{CBS19, kpotufe11, zl19, zl19unpublished}. In, ~\cite{Gyorfi81} the convergence rate of the local risk of the $k$-NN estimator for a given query point $\bbx$ has been studied and it was shown that for $\alpha > 0$ and $k = n^{1/(1 + (d/2\alpha))}$ the rate of convergence in probability is at least $n^{-1/(2 + (d/\alpha))}$. It has been shown~\cite{BD15, dgw17} that if the density of $\bbx$ is bounded from zero on its support, then the convergence rate of the local risk and the risk are the same. In the general case, it is interesting to find the optimal value of $k$ for given query point $\bbx$. In~\cite{kpotufe11}, value of $k$ has been chosen adaptively for each query point $\bbx$ such that  the $k$-NN estimator nearly achieve the minimax rate at $\bbx$. This suggested value of $k$ requires a tuning parameter that depends on the unknown intrinsic dimension of the vicinity of $\bbx$. In the classification setting, and when we have a sample of unlabeled data, it has been suggetsed in~\cite{CBS19} to use the unlabeled data for estimating the density of the $\bbx_i$'s and then use that information for choosing the value of $k$. Unfortunately, in many situations, we do not have this extra unlabeled sample. With a similar idea and for both classification and regression purposes, in~\cite{zl19, zl19unpublished}, value of $k$ has been chosen adaptively for each query point. Although this work does not require an estimate of the density, it requires three tuning parameters which are chosen by the user. 
	
	The main contribution of this paper is a non-asymptotic error bound which shows that a very old and simple method of choosing $k$ by a certain kind of cross-validation, known as leave-one-out cross-validation (LOOCV), provides an optimal value of $k$ quickly and efficiently. The consistency of this procedure has been known for a long time~\cite{Li84}, but the fact that this method is able to produce an optimal $k$ was not known before. The advantage of this simple method is that it does not require estimating the density or choosing any tuning parameter. 

\section{Main result}\label{main}
Let $\bbx$ be a deterministic $d$-dimensional vector and 
\[
y = m(\bbx)+\epsilon,
\]
where $m:\rr^d\to \rr$ is a measurable function and $\epsilon$ is a mean-zero random variable that is independent of $\bbx$. Let $\mu:= m(\bbx)$. Assume that there exists a finite constant $K$ such that $\ee(\exp({\epsilon^2/K})) \leq 2$. 

Let $\epsilon_1, \cdots, \epsilon_n$ be a sampe of i.i.d.~copies of $\epsilon$. For $n$ deterministic measurements $\bbx_1, \cdots, \bbx_n$, we have a data set of $n$ pairs $(\bbx_1, y_1), \cdots, (\bbx_n, y_n)$. For each $i$, let $N_k(i)$ be the indices of the $k$ nearest neighbors of $\bbx_i$ among $\bbx_1,\ldots,\bbx_{i-1},\bbx_{i+1},\ldots, \bbx_n$, where ties are broken at random. Define
\begin{eqnarray*}
\hat{m}_{k,n}(\bbx_i) := \frac{1}{k}\sum_{j\in N_k(i)} y_j.
\end{eqnarray*}
We define the mean squared error of this estimator as
\begin{eqnarray*}
\text{MSE}(k) = \ee[\frac{1}{n}\sum_{i=1}^n(m(\bbx_i) - \hat{m}_{k,n}(\bbx_i))^2],
\end{eqnarray*}
where the expectation is taken over the $\epsilon_i$'s. Note that this is not exactly the common mean squared error defined in the literature, since we are excluding $\bbx_i$ from its set of nearest neighbors. We are considering this to be the $\text{MSE}$, since the $\bbx_i$'s are deterministic, and so there is no concept of a new $\bbx$ from the distribution of the $\bbx_i$'s, and using $\bbx_i$ in the estimate for $\mu_i$ would cause overfitting. 

Our object of interest is the number 
\begin{eqnarray}\label{kstar}
k^* := \argmin_{1\le k\le n - 1} \text{MSE}(k).
\end{eqnarray}
Of course, we cannot directly compute $k^*$ from the data since the function $m$ is unknown. Instead, we produce a surrogate. Define 
\begin{eqnarray*}
	f(k) := \frac{1}{n}\sum_{i=1}^n\biggl(y_i - \frac{1}{k}\sum_{j\in N_k(i)}y_j\biggr)^2, 
\end{eqnarray*}
and let 
\begin{eqnarray}\label{ktilde}
\tilde{k} := \argmin_{1\le k\le n-1} f(k).
\end{eqnarray}
Note that $\tilde{k}$, unlike $k^*$,  is computed from the data. The intention is to use $\tilde{k}$ as the chosen value of $k$ in $k$-NN regression. This procedure for selecting $k$ is known as leave-one-out cross-validation (LOOCV). The following theorem, which is the main result of this paper, shows that 
\[
|\MSE(k^*)- \MSE(\tilde{k})| = O\biggl(\sqrt{\frac{\log n}{n}}\biggr) 
\]
when $K$ and $d$ are fixed. One of the main strengths of this theorem is that no other condition is needed.  
\begin{theorem}\label{probBound}
Let $K$, $k^*$, and $\tilde{k}$ be as above and $\bm\mu = (\mu_1, \cdots, \mu_n)$ where $\mu_i = m(\bbx_i)$. Then there are positive constants $A$, $B$ and $C$ depending on $d$ and $K$, such that for any $t\ge 0$, 
	\begin{eqnarray*}
	\lefteqn{\pp(|\emph{\MSE}(k^*) - \emph{\MSE}(\tilde{k})| \geq t) \le }\\
	&& 4n\exp({-n\min\{At^2, Bt\}}) + 4n\exp({-Cn^2t^2 / \|\bm\mu\|^2}).
	\end{eqnarray*}
\end{theorem}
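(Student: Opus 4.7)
The plan is to reduce the claim to a uniform deviation bound for the LOOCV objective around its mean. A direct computation---crucially using the fact that $i \notin N_k(i)$, so that $\epsilon_i$ is independent of $\{\epsilon_j\}_{j \in N_k(i)}$---gives the identity
\begin{equation*}
\ee[f(k)] = \MSE(k) + \sigma^2, \qquad \sigma^2 := \var(\epsilon),
\end{equation*}
for every $k$. Because the offset $\sigma^2$ does not depend on $k$, $\argmin_k \ee[f(\cdot)] = k^*$, and combining this with $f(\tilde k) \le f(k^*)$ and $\MSE(\tilde k) \ge \MSE(k^*)$ yields
\begin{equation*}
0 \le \MSE(\tilde k) - \MSE(k^*) \le 2 \sup_{1 \le k \le n-1} |f(k) - \ee[f(k)]|.
\end{equation*}
Thus it suffices to bound the right-hand side uniformly in $k$.

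Write $b_i(k) := \mu_i - \tfrac{1}{k}\sum_{j \in N_k(i)}\mu_j$ and $v_i(k) := \epsilon_i - \tfrac{1}{k}\sum_{j \in N_k(i)}\epsilon_j$. I decompose
\begin{equation*}
f(k) - \ee[f(k)] \;=\; \underbrace{\tfrac{2}{n}\sum_{i=1}^n b_i(k) v_i(k)}_{T_1(k)} \;+\; \underbrace{\tfrac{1}{n}\sum_{i=1}^n\bigl(v_i(k)^2 - \ee\, v_i(k)^2\bigr)}_{T_2(k)}.
\end{equation*}
After rearrangement, $T_1(k) = \tfrac{2}{n}\sum_j c_j(k)\epsilon_j$ is a linear form in the sub-Gaussian $\epsilon_j$'s, with coefficients $c_j(k) = b_j(k) - \tfrac{1}{k}\sum_{i:\,j \in N_k(i)} b_i(k)$; the hypothesis $\ee[\exp(\epsilon^2/K)] \le 2$ then yields a sub-Gaussian tail $\pp(|T_1(k)| \ge t) \le 2\exp(-c n^2 t^2/(K \sum_j c_j(k)^2))$. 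The second piece is a centered quadratic form $T_2(k) = \tfrac{1}{n}(\epsilon^\top A(k)\epsilon - \ee \epsilon^\top A(k)\epsilon)$ with $A(k) = W(k)^\top W(k)$, where $W(k) = I - \tfrac{1}{k}N(k)$ and $N(k)_{ij} = \ii[j \in N_k(i)]$ encodes the $k$-NN graph; the Hanson--Wright inequality then delivers $\pp(|T_2(k)| \ge t) \le 2\exp(-c\min\{n^2 t^2/(K^2\|A(k)\|_F^2),\, nt/(K\|A(k)\|_{\mathrm{op}})\})$. A union bound over $k \in \{1,\dots,n-1\}$ assembles these into the two summands of the theorem, provided that $\sum_j c_j(k)^2 \le C\|\bm\mu\|^2$, $\|A(k)\|_F^2 \le Cn$, and $\|A(k)\|_{\mathrm{op}} \le C$ with $C = C(d)$.

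The main obstacle is exactly this uniform-in-$k$ control of the three matrix-theoretic quantities. All three reductions rest on the classical geometric lemma that, for points in $\rr^d$, any fixed $\bbx_j$ lies in at most $C_d k$ of the neighborhoods $N_k(i)$, with $C_d$ a dimension-only constant. This gives $\|N(k)\|_{\infty \to \infty} \le k$ and $\|N(k)\|_{1 \to 1} \le C_d k$, so the interpolation bound $\|\cdot\|_{\mathrm{op}} \le \sqrt{\|\cdot\|_{1\to 1}\|\cdot\|_{\infty\to\infty}}$ yields $\|N(k)/k\|_{\mathrm{op}} \le \sqrt{C_d}$, hence $\|W(k)\|_{\mathrm{op}} \le 1 + \sqrt{C_d}$ and $\|A(k)\|_{\mathrm{op}} = O_d(1)$. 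Since $\|W(k)\|_F^2 = n(1 + 1/k) \le 2n$, submultiplicativity $\|A(k)\|_F \le \|W(k)\|_{\mathrm{op}}\|W(k)\|_F$ gives $\|A(k)\|_F^2 = O_d(n)$. The same geometric fact, combined with Cauchy--Schwarz on the definition of $c_j(k)$ and Jensen on the definition of $b_i(k)$, gives $\sum_j c_j(k)^2 \le C_d' \sum_i b_i(k)^2 \le C_d'' \|\bm\mu\|^2$. Substituting these estimates into the sub-Gaussian and Hanson--Wright tails, replacing $t$ by $t/4$ to absorb the factor of $2$ coming from the argmin comparison, and taking the union bound over $k$, produces the stated inequality with constants $A$, $B$, $C$ depending only on $d$ and $K$.
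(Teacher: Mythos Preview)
Your proof is correct and follows essentially the same route as the paper: the same identity $\ee[f(k)]=\MSE(k)+\sigma^2$, the same reduction to $\sup_k|f(k)-\ee f(k)|$ via the optimality of $\tilde k$ and $k^*$, the same linear/quadratic decomposition handled by a sub-Gaussian (Hoeffding) tail and Hanson--Wright respectively, and the same bounded-degree $k$-NN geometric lemma to control the relevant norms. The only cosmetic differences are that the paper works directly with $\bba=\bbB^T\bbB/n$ and bounds $\|\bba\|_2$ and $\|\bba\|_F$ by elementary combinatorial arguments, whereas you obtain the equivalent bounds via the interpolation estimate $\|N(k)/k\|_{\mathrm{op}}\le\sqrt{\|N(k)/k\|_{1\to1}\|N(k)/k\|_{\infty\to\infty}}$ and submultiplicativity; both routes are straightforward and yield the same dimension-only constants.
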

A remarkable consequence of the above theorem is that the choice of $k$ by LOOCV adapts automatically to the smoothness of the regression function $m$, because the bound on the right does not depend on the smoothness of $m$.  

In many situations, $\MSE(k^*)$ is much greater than $n^{-1/2}(\log n)^{1/2}$. For example, by \cite[Theorem 3.2]{gkkw02}, for Lipschitz functions with bounded support, the lower minimax rate of convergence (in terms of $\MSE$) is $O(n^{-2/(2+d)})$ and for $d\geq 3$. In such cases, this result implies that $\MSE(k^*)/\MSE(\tilde{k}) \to 1$ as $n\to\infty$.

\section{Proof}\label{proofs}
For a matrix $\bba$, recall that the $2$-norm $\|\bba\|_2$ and the Frobenius norm $ \|\bba\|_{\text{F}}$ are defined as
\begin{eqnarray*}
	\|\bba\|_2 = \sup_{\|\bbx\| = 1} \|\bba\bbx\|, \qquad \|\bba\|_{\text{F}} = \biggl(\sum_{i, j}a_{ij}^2\biggr)^{1/2}.
\end{eqnarray*}
Throughout this proof, $\gamma_d$ will denote any constant that depends only on $d$. The value of $\gamma_d$ may change from line to line.

	Let $\mu_i := m(\bbx_i)$, and $\bm\epsilon = (\epsilon_1, \ldots, \epsilon_n)$ and $\bm\mu = (\mu_1, \ldots, \mu_n)$.  By writing $y_i = \mu_i + \epsilon_i$, we have	
	\begin{eqnarray*}
	f(k) &=& \frac{1}{n}\sum_{i=1}^n\biggl(y_i - \frac{1}{k}\sum_{j\in N_k(i)}y_j\biggr)^2 \\
	&=& \frac{1}{n}\sum_{i=1}^n\biggl(\mu_i + \epsilon_i - \frac{1}{k}\sum_{j\in N_k(i)}y_j\biggr)^2 \\
	&=& \frac{1}{n}\sum_{i=1}^n\biggl[\biggl(\mu_i - \frac{1}{k}\sum_{j\in N_k(i)}y_j\biggr)^2 + \epsilon_i^2 + 2\epsilon_i\biggl(\mu_i - \frac{1}{k}\sum_{j\in N_k(i)}y_j\biggr)\biggr]. 
	\end{eqnarray*}
	Since the $\epsilon_i$'s are independent with mean zero, taking expectation on both sides gives
	\begin{eqnarray}\label{fdecomposition}
	\ee[f(k)] = \ee[\epsilon^2] + \MSE(k).
	\end{eqnarray}
	Define $g(k) := \ee[f(k)]$. By definition of $k^*$,  $\MSE(k^*)\leq\MSE(k)$ for all $k$. In particular, $\MSE(k^*)\leq\MSE(\tilde{k})$, which implies that $g(k^*)\leq g(\tilde{k})$. Also by definition of $\tilde{k}$, we have $f(\tilde{k}) \leq f(k^*)$. Putting these two together, we get
	\begin{eqnarray}
	\lefteqn{\pp(|\MSE(k^*) - \MSE(\tilde{k})| \geq t)} \nonumber\\ 
	& & = \pp(g(\tilde{k}) - g(k^*)  \geq t)\nonumber\\
	& & \leq \pp(g(\tilde{k}) - g(k^*)  \geq t + f(\tilde{k}) - f(k^*)) \nonumber \\
	& & \leq \pp(|g(k^*) - f(k^*)| \geq t/2) + \pp(|g(\tilde{k}) - f(\tilde{k})| \geq t/2) \nonumber\\
	& & \leq 2\sum_{k=1}^{n-1}\pp(|f(k)-g(k)|\ge t/2). \nonumber
	\end{eqnarray} 
Thus, the proof will be complete if we can prove the following lemma. 	
	\begin{lemma}\label{concentraionBoundExpected}
	There are positive constants $A$, $B$ and $C$ depending on $d$ and $K$ such that for any $1\le k\le n-1$ and any $t > 0$, 
	\begin{eqnarray*}
	\pp(|f(k) - g(k)|\geq t) &\leq & 2e^{-n\min\{At^2, Bt\}} + 2e^{-Cn^2t^2 / \|\bm\mu\|^2}.
	\end{eqnarray*}
	\end{lemma}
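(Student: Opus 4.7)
The plan is to expand $f(k)$ as a quadratic form in the noise vector $\bm\epsilon$ and apply standard concentration tools. Define the $n\times n$ matrix $W$ by $W_{ii}=1$, $W_{ij}=-1/k$ if $j\in N_k(i)$, and $W_{ij}=0$ otherwise. With $y_i=\mu_i+\epsilon_i$, the $i$-th residual equals $(W\bm\mu)_i+(W\bm\epsilon)_i$, so
\begin{eqnarray*}
nf(k)=\|W\bm\mu\|^2+2(W\bm\mu)^T W\bm\epsilon+\bm\epsilon^T W^T W\bm\epsilon.
\end{eqnarray*}
Taking expectation, using independence, $\ee[\epsilon_i]=0$, and writing $\sigma^2:=\ee[\epsilon_i^2]$ (finite since $\epsilon^2/K\le e^{\epsilon^2/K}$ forces $\sigma^2\le 2K$), we get
\begin{eqnarray*}
n(f(k)-g(k))=2(W^T W\bm\mu)^T\bm\epsilon+\bigl(\bm\epsilon^T W^T W\bm\epsilon-\sigma^2\tr(W^T W)\bigr),
\end{eqnarray*}
a linear piece plus a centered quadratic piece in $\bm\epsilon$. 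A union bound applied at level $t/2$ reduces Lemma \ref{concentraionBoundExpected} to controlling each of these separately.

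Because the hypothesis $\ee[\exp(\epsilon_i^2/K)]\le 2$ makes each $\epsilon_i$ sub-Gaussian with parameter depending only on $K$, Hoeffding's inequality applied to the linear piece, whose coefficient vector $\tfrac{2}{n}W^T W\bm\mu$ has squared norm at most $\tfrac{4}{n^2}\|W^T W\|_2^2\|\bm\mu\|^2$, gives a tail of the form $2\exp\bigl(-c\,n^2 t^2/(\|W^T W\|_2^2\|\bm\mu\|^2)\bigr)$. For the quadratic piece, the Hanson-Wright inequality (Rudelson-Vershynin) for sub-Gaussian vectors yields
\begin{eqnarray*}
\pp\Bigl(\tfrac{1}{n}\bigl|\bm\epsilon^T W^T W\bm\epsilon-\ee[\bm\epsilon^T W^T W\bm\epsilon]\bigr|\ge t\Bigr)\le 2\exp\Bigl(-c\min\Bigl(\tfrac{n^2 t^2}{\|W^T W\|_F^2},\tfrac{nt}{\|W^T W\|_2}\Bigr)\Bigr),
\end{eqnarray*}
with $c$ depending only on $K$. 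So the lemma reduces to uniform-in-$k$ deterministic bounds $\|W^T W\|_2\le\gamma_d$ and $\|W^T W\|_F^2\le\gamma_d\,n$.

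These norm bounds are the main obstacle, and they rest on the only genuinely geometric input in the argument: in $\rr^d$ each point is among the $k$ nearest neighbors of at most $\gamma_d k$ other points (Stone's lemma, with $\gamma_d$ related to the kissing number). Writing $W=I-M$ with $M_{ij}=1/k$ on $N_k(i)$ and zero elsewhere, Stone's lemma says $M$ has row sums equal to $1$ and column sums at most $\gamma_d$, so $\|M\|_2\le\sqrt{\|M\|_1\|M\|_\infty}\le\sqrt{\gamma_d}$ and $\|W\|_2\le 1+\sqrt{\gamma_d}$, giving the spectral bound on $W^T W$. For the Frobenius bound,
\begin{eqnarray*}
\|W^T W\|_F^2=\tr((W^T W)^2)\le\|W^T W\|_2\tr(W^T W)=\|W^T W\|_2\|W\|_F^2,
\end{eqnarray*}
and a direct computation gives $\|W\|_F^2=n(1+1/k)\le 2n$. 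Substituting these estimates into the two tail bounds from the previous paragraph yields the claimed exponents $n\min(At^2,Bt)$ and $Cn^2 t^2/\|\bm\mu\|^2$, with $A,B,C$ absorbing the constants coming from $K$ and $\gamma_d$. The step that merits the most care is the spectral bound, since it is the only place where the ambient dimension and the combinatorial structure of the $k$-nearest neighbor graph interact, and where the proof would fail in a general metric space without a bounded-multiplicity property.
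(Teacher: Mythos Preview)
Your proof is correct and follows the same architecture as the paper: the same quadratic-form decomposition of $f(k)-g(k)$ into a linear and a centered quadratic piece in $\bm\epsilon$, the same pairing of Hoeffding and Hanson--Wright, and the same reliance on Stone's lemma (the paper cites \cite[Corollary~6.1]{gkkw02}) to control the operator and Frobenius norms of $W^TW$. The only differences are in the norm-bound derivations and are cosmetic: the paper bounds $\|W^TW\|_F^2$ by a direct combinatorial count of pairs $(i,j)$ whose neighborhood sets $\{i\}\cup N_k(i)$ and $\{j\}\cup N_k(j)$ intersect, whereas your trace inequality $\|W^TW\|_F^2\le\|W^TW\|_2\,\|W\|_F^2$ is shorter; similarly, the paper bounds $\|W\bbx\|^2$ directly via Cauchy--Schwarz and a double-counting step rather than via your Schur-type bound $\|M\|_2\le\sqrt{\|M\|_1\|M\|_\infty}$.
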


	Define a nonsymmetric $n\times n$ matrix $\bbB = [b_{ij}]$ as	\begin{eqnarray}\label{Bdef}
	b_{ij} := \left\{ 
	\begin{array}{ll}
	1 & \qquad i = j, \\
	0 & \qquad j\not\in N_k(i), \\
	-1/k & \qquad j\in N_k(i).
	\end{array}\right. 
	\end{eqnarray}
	Let $\bba = [a_{ij}] = \bbB^T\bbB/n$. Then we can rewrite $f(k)$ in the following form:
	\begin{eqnarray}
	f(k) = (\bm\epsilon + \bm\mu)^T \bba(\bm\epsilon + \bm\mu).
	\end{eqnarray}
	Using the triangle inequality, we have
	\begin{eqnarray*}
	|f(k) - g(k)| &=& |\bm\epsilon^T \bba\bm\epsilon - \ee[\bm\epsilon^T \bba\bm\epsilon] + 2\bm\epsilon^T \bba\bm\mu| \\
	&\leq & |\bm\epsilon^T \bba\bm\epsilon - \ee[\bm\epsilon^T \bba\bm\epsilon]| + 2|\bm\epsilon^T \bba\bm\mu|.
	\end{eqnarray*}
Therefore it is enough to find probability tail bounds on $|\bm\epsilon^T \bba\bm\epsilon - \ee[\bm\epsilon^T \bba\bm\epsilon]|$ and $|\bm\epsilon^T \bba\bm\mu|$. To find such bounds we need to have bounds on the Frobenius norm and the  $2$-norm of $\bba$. The following lemmas give such bounds.
	\begin{lemma}\label{normFrobeniousA}
	For the matrix $\bba$ defined above, 
	\begin{eqnarray}
	\|\bba\|_{\text{F}}^2 \leq \frac{\gamma_d}{n},
	\end{eqnarray}
	where $\gamma_d$ is a constant that only depends on $d$. 
	\end{lemma}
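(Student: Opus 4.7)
The plan is to work with $\bbB^T\bbB$ directly, since $\|\bba\|_F^2 = \|\bbB^T\bbB\|_F^2/n^2$, so it suffices to prove $\|\bbB^T\bbB\|_F^2 \le \gamma_d n$.

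First I would compute the entries of $\bbB^T\bbB$ by a case analysis on the three types of indices $l$ in the sum $(\bbB^T\bbB)_{ij}=\sum_l b_{li}b_{lj}$. Writing $M_i := |\{l: i\in N_k(l)\}|$ for the ``reverse $k$-NN count'' of $i$, the diagonal entries satisfy $(\bbB^T\bbB)_{ii} = 1 + M_i/k^2$, while for $i\ne j$
\[
(\bbB^T\bbB)_{ij} = -\tfrac{1}{k}\bigl[\mathbb{1}(j\in N_k(i)) + \mathbb{1}(i\in N_k(j))\bigr] + \tfrac{1}{k^2}C_{ij},
\]
where $C_{ij} := |\{l\ne i,j: i,j\in N_k(l)\}|$.

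The central ingredient is the classical geometric fact (going back to Stone's 1977 consistency proof) that in $\mathbb{R}^d$ every point can be one of the $k$ nearest neighbors of at most $\gamma_d k$ other points, i.e. $M_i \le \gamma_d k$ for all $i$, with $\gamma_d$ depending on a covering/kissing number of the unit sphere. I would either cite this lemma or sketch the usual proof: partition $\mathbb{R}^d$ into $\gamma_d$ cones of angular diameter less than $\pi/3$ at $\bbx_i$, and observe that in any such cone, $\bbx_i$ can be among the $k$ nearest neighbors of at most $k$ points.

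With this bound in hand, the sums break up cleanly. The diagonal contribution is $\sum_i (1 + M_i/k^2)^2 \le n(1+\gamma_d/k)^2 \le \gamma_d n$. For the first off-diagonal piece, using $(a+b)^2 \le 2(a^2+b^2)$ and $\mathbb{1}^2=\mathbb{1}$,
\[
\sum_{i\ne j}\tfrac{1}{k^2}\bigl[\mathbb{1}(j\in N_k(i)) + \mathbb{1}(i\in N_k(j))\bigr]^2 \le \tfrac{4}{k^2}\cdot nk = \tfrac{4n}{k} \le 4n,
\]
since $\sum_{i,j}\mathbb{1}(j\in N_k(i))=nk$. For the $C_{ij}^2$ term I would use the double bound $C_{ij}\le \min(M_i,M_j)\le \gamma_d k$ together with $\sum_{i,j}C_{ij} = \sum_l |N_k(l)|^2 \le nk^2$, giving
\[
\sum_{i,j} C_{ij}^2 \le \gamma_d k \sum_{i,j}C_{ij} \le \gamma_d n k^3,
\]
so $\frac{1}{k^4}\sum_{i,j}C_{ij}^2 \le \gamma_d n/k \le \gamma_d n$. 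Adding the three contributions yields $\|\bbB^T\bbB\|_F^2 \le \gamma_d n$, as required.

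The only nontrivial step is the reverse $k$-NN bound $M_i \le \gamma_d k$; without a dimension-dependent constant here, the proof fails, and this is where the $d$-dependence of $\gamma_d$ enters. Everything else is elementary algebra and double-counting.
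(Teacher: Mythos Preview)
Your proof is correct and follows essentially the same approach as the paper: both arguments reduce to the reverse $k$-NN bound $M_i\le \gamma_d k$ (the paper cites \cite[Corollary 6.1]{gkkw02}) and then do elementary double-counting. The only cosmetic difference is that the paper works with $\bbB\bbB^T$ (rows of $\bbB$, so the off-diagonal entries involve $|N_k(i)\cap N_k(j)|$ rather than your $C_{ij}$), bounds each off-diagonal entry uniformly by $O(1/k)$, and then counts how many are nonzero; you instead work with $\bbB^T\bbB$ and split the off-diagonal sum into an indicator piece and a $C_{ij}$ piece, bounding each by $O(n)$ separately.
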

	
	\begin{lemma}\label{Norm2A}
	For the matrix $\bba$ defined above, 
	\begin{eqnarray}
	\|\bba\|_2 \leq \frac{\gamma_d}{n},
	\end{eqnarray}
	where $\gamma_d$ is a constant that only depends on $d$.
	\end{lemma}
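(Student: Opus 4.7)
The plan is to bound $\|\bba\|_2$ through $\bbB$ directly. Since $\bba = \bbB^T\bbB/n$, we have $\|\bba\|_2 = \|\bbB\|_2^2/n$, so it is enough to show $\|\bbB\|_2 \le c_d$ for a constant depending only on $d$. Write $\bbB = I - k^{-1}N$, where $N = [n_{ij}]$ is the $0$--$1$ matrix with $n_{ij} = 1$ iff $j \in N_k(i)$, and use the triangle inequality:
\[
\|\bbB\|_2 \le 1 + \frac{1}{k}\|N\|_2.
\]
Thus the problem reduces to showing $\|N\|_2 \le \gamma_d \cdot k$.

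To bound $\|N\|_2$ I would use the standard inequality $\|N\|_2 \le \sqrt{\|N\|_1 \|N\|_\infty}$, where $\|N\|_1$ is the maximum column sum and $\|N\|_\infty$ is the maximum row sum of $|N|$. By construction every row of $N$ has exactly $k$ ones (since $|N_k(i)| = k$), so $\|N\|_\infty = k$. The nontrivial bound is on the column sums: for each fixed $j$, the column sum $\sum_i n_{ij}$ counts the number of points $\bbx_i$ for which $\bbx_j$ is one of the $k$ nearest neighbors. Here I would invoke the classical geometric fact (sometimes called Stone's lemma, proved via a cone-covering argument in $\rr^d$) that any fixed point in $\rr^d$ can be among the $k$ nearest neighbors of at most $\gamma_d \cdot k$ other points, where $\gamma_d$ depends only on $d$. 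Combining these gives $\|N\|_2 \le \sqrt{k \cdot \gamma_d k} = \sqrt{\gamma_d}\, k$, hence $\|\bbB\|_2 \le 1 + \sqrt{\gamma_d}$, and finally $\|\bba\|_2 \le (1+\sqrt{\gamma_d})^2/n$, which has the claimed form.

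The main obstacle is the column-sum bound, i.e.\ the geometric fact that a point can be a $k$-nearest neighbor of at most $O_d(k)$ other points. This is a well-known consequence of partitioning $\rr^d$ into $\gamma_d$ cones of aperture less than $\pi/3$ around $\bbx_j$: within each such cone, the points whose set of $k$ nearest neighbors contains $\bbx_j$ must be among the $k$ closest to $\bbx_j$ in that cone, since otherwise a closer point in the cone would be nearer to them than $\bbx_j$. Ties being broken at random do not affect this deterministic packing bound. Once this is in place, the rest is just the two matrix-norm inequalities above, so there are no further calculations to carry out.
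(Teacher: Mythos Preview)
Your proof is correct and essentially the same as the paper's. The paper also reduces to bounding $\|\bbB\|_2$ by a dimensional constant and uses the same geometric fact (cited there as \cite[Corollary~6.1]{gkkw02}) that any point is a $k$-nearest neighbor of at most $\gamma_d k$ others; the only cosmetic difference is that the paper bounds $\|\bbB\bbx\|^2$ directly via $(a+b)^2\le 2a^2+2b^2$ and Cauchy--Schwarz rather than invoking the matrix inequality $\|N\|_2 \le \sqrt{\|N\|_1\|N\|_\infty}$.
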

%%%%%%%%%%%%%%%%%%%%%%%%%%%%%%%%%%%%%%%%%	

We will prove these lemmas later. 
%%%%%%% First part
	\begin{proof}[Proof of Lemma \ref{concentraionBoundExpected}]

Throughout this proof as usual, $\gamma_d$ denotes any constant that depends only on $d$, and $c$ will denote any universal constant.  First, let us obtain a tail bound for $|\bm\epsilon^T \bba\bm\epsilon - \ee[\bm\epsilon^T \bba\bm\epsilon]|$.
	By the Hanson--Wright inequality \cite{rv13} and Lemmas~\ref{normFrobeniousA} and \ref{Norm2A}, we have
	\begin{eqnarray}\label{HansonWright}
  	\lefteqn{\pp(|\bm\epsilon^T \bba\bm\epsilon - \ee[\bm\epsilon^T \bba\bm\epsilon]| \geq t)}\nonumber\\
  	& & \leq 2\exp\left(-c\min\left\{\frac{t^2}{K^2\|\bba\|_{\text{F}}^2}, \frac{t}{K\|\bba\|_2}\right\}\right) \nonumber\\
  	& & \leq 2\exp\left(-n\min\left\{\frac{t^2}{K^2\gamma_d}, \frac{t}{K_1\gamma_d}\right\}\right).
	\end{eqnarray} 
	An easy computation gives  
	\begin{eqnarray}\label{asequal}
		\ee[\bm\epsilon^T \bba\bm\epsilon] &=& \frac{1}{n}\sum_{i=1}^n \ee\biggl(\epsilon_i - \frac{1}{k}\sum_{j\in N_k(i)} \epsilon_j\biggr)^2\nonumber\\
		&=& \biggl(1+\frac{1}{k}\biggr) \ee(\epsilon^2).
	\end{eqnarray}	
	Putting \eqref{asequal} and \eqref{HansonWright} together gives us
	\begin{eqnarray}\label{firstPart}
	 \pp(|\bm\epsilon^T\bba\bm\epsilon - \ee[\bm\epsilon^T \bba\bm\epsilon]| \geq t) &=& \pp(|\bm\epsilon^T \bba\bm\epsilon - \ee[\bm\epsilon^T \bba\bm\epsilon]| \geq t) \nonumber\\
	 &\leq& 2\exp\left(-n\min\left\{\frac{t^2}{K^2\gamma_d}, \frac{t}{K\gamma_d}\right\}\right).
	\end{eqnarray}
%%%%%%% Second part 		
Next, we obtain a tail bound for $|\bm\epsilon^T \bba\bm\mu|$. Remember that $\ee[e^{\epsilon_i^2/K}] \leq 2$ and therefore $\epsilon_i$'s are sub-Gaussian. Then by the equivalent properties of sub-Gaussian random variables \cite{v12}, there exist a constant $C$ that only depends on $K$ such that $\ee[e^{\lambda\epsilon}] \leq e^{\lambda^2C/2}$ for all $\lambda$. Then by using the Hoeffding bound for sub-Gaussian variables \cite[Proposition 2.5]{w19}, we have 
	\begin{eqnarray}\label{HoeffdingBase}
	\pp(|\bm\epsilon^T \bba\bm\mu| \geq t) & = & \pp\biggl(\biggl|\sum_{j=1}^n\biggl(\sum_{i=1}^n a_{ji}\mu_{i}\biggr)\epsilon_j \biggr| \geq t \biggr) \nonumber\\
	& \leq & 2\exp\left(-\frac{t^2}{2 C\sum_{j=1}^n(\sum_{i=1}^n a_{ji}\mu_{i})^2}\right) .
	\end{eqnarray}
	Note that 
	\begin{eqnarray}\label{denomHoeffding}
	\sum_{j=1}^n\biggl(\sum_{i=1}^n a_{ji}\mu_{j}\biggr)^2 & \leq & \|\bba\|_2^2 \|\bm\mu\|^2.
	\end{eqnarray}
	Inequalities \eqref{HoeffdingBase} and \eqref{denomHoeffding} together give
	\begin{eqnarray}
	\pp(|\bm\epsilon^T \bba\bm\mu| \geq t) & \leq & 2\exp\left(-\frac{C t^2}{2\|\bba\|_2^2 \|\bm\mu\|^2}\right).
	\end{eqnarray}
	Therefore by Lemma~\ref{Norm2A},
	\begin{eqnarray}\label{secondPart}
	\pp(|\bm\epsilon^T \bba\bm\mu | \geq t ) &\leq & 2\exp\left(-\frac{C n^2t^2}{\gamma_d\|\bm\mu\|^2}\right).
	\end{eqnarray}

%%%%%%%% Combining first and second part 
	Combining \eqref{firstPart} and \eqref{secondPart}, we get
	\begin{eqnarray*}
	\pp(|f(k) - g(k)|\geq t) & \leq & 2\exp\left(-n\min\left\{\frac{t^2}{K^2\gamma_d}, \frac{t}{K \gamma_d}\right\}\right) + \\
	&& 2\exp\left(-\frac{C n^2t^2}{\gamma_d\|\bm\mu\|^2}\right).
	\end{eqnarray*}
	\end{proof}

\begin{proof}[Proof of Lemma \ref{normFrobeniousA}] 
	Let $\mathbf{b}_i$ be the $i$-th row of matrix $\bbB$. Then
	\begin{eqnarray*}
	\|\bba\|_{\text{F}}^2 &=&  \frac{1}{n^2}\sum_{i, j} \langle \mathbf{b}_i,\mathbf{b}_j\rangle^2 \\
		&=& \frac{1}{n}\biggl(1 + \frac{1}{k}\biggr) + \frac{1}{n^2}\sum_{i = 1}^n\sum_{j\neq i} \langle \mathbf{b}_i,\mathbf{b}_j\rangle^2.
	\end{eqnarray*}
	For any distinct $i, j$,  
	\begin{eqnarray*}
	\langle \mathbf{b}_i,\mathbf{b}_j\rangle = -\frac{1}{k}[1_{\{i\in N_k(j)\}} + 1_{\{j\in N_k(i)\}}] + \frac{1}{k^2}|N_k(i)\cap N_k(j)|,
	\end{eqnarray*}
	and therefore 
	\begin{eqnarray}
	|\langle \mathbf{b}_i,\mathbf{b}_j\rangle| & \leq & \frac{2}{k}.
	\end{eqnarray}
	This shows that for any $i$, 
	\begin{eqnarray}
	\sum_{j\neq i} \langle \mathbf{b}_i,\mathbf{b}_j\rangle^2 \leq \frac{4}{k^2}|\{j:  \langle \mathbf{b}_i,\mathbf{b}_j\rangle \neq 0\}|.
	\end{eqnarray}
	But if $\langle \mathbf{b}_i,\mathbf{b}_j\rangle \neq 0$, then 
	\begin{eqnarray}
	\left(\{i\}\cup N_k(i)\right) \cap \left(\{j\}\cup N_k(j)\right) \neq \emptyset.
	\end{eqnarray}
	By definition $|N_k(i)| = k$, and for any $\ell$, by  \cite[Corollary 6.1]{gkkw02} there are at most $\gamma_d k$ indices $j$ such that $\ell\in N_k(j)$. Therefore for any $i$, 
	\begin{eqnarray}
	|\{j:\langle \mathbf{b}_i, \mathbf{b}_j\rangle \neq 0\}| \leq \gamma_d k(k+1).
	\end{eqnarray}
	This gives the required bound on $\|\bba\|_{\text{F}}^2$.
	\end{proof}

	\begin{proof}[Proof of Lemma \ref{Norm2A}]
Take  any $\bbx$ such that $\|\bbx\| = 1$. Then by \cite[Corollary 6.1]{gkkw02}, 
	\begin{eqnarray*}
	\|\bbB \bbx\|^2 &=& \sum_{i=1}^n \langle \mathbf{b}_i, \bbx \rangle^2 \\
	&\leq &  2\sum_{i=1}^n x_i^2 + 2\sum_{i=1}^n\biggl(\frac{1}{k}\sum_{j\in N_k(i)}x_j\biggr)^2 \\
	&\leq & 2\|x\|^2 + \frac{2}{k}\sum_{i=1}^n\sum_{j\in N_k(i)}x_j^2 \\
	&=& 2\|x\|^2 + \frac{2}{k}\sum_{j=1}^n\sum_{i:j\in N_k(i)}x_j^2 \\
	&\leq & \gamma_d\|x\|^2.
	\end{eqnarray*}
	Therefore $\|B\|_2\le \gamma_d$ and hence $\|A\|_2\le \gamma_d/n$.
	\end{proof}
An R language package \pkg{knnopt} will soon be made available on the CRAN repository.

\section*{Acknowledgement}
	The author is very grateful to her advisor Sourav Chatterjee for his constant encouragement and insightful conversations and comments.

\end{document}